\def\thtext#1{
  \catcode`@=11
  \gdef\@thmcountersep{. #1}
  \catcode`@=12
}
\def\threst{
  \catcode`@=11
  \gdef\@thmcountersep{.}
  \catcode`@=12
}
\theoremstyle{plain}
\newtheorem{thm}{Theorem}[section]
\newtheorem{prop}[thm]{Proposition}
\newtheorem{cor}[thm]{Corollary}
\theoremstyle{definition}
 \def\.{.\spacefactor\@m}
\def\R{\mathbb R}
\def\a{\alpha}
\def\e{\varepsilon}
\def\D{\Delta}
\def\l{\lambda}
\def\0{\emptyset}
\def\:{\colon}
\def\<{\langle}
\def\>{\rangle}
\def\rom#1{\emph{#1}}
\def\({\rom(}
\def\){\rom)}
\def\ss{\subset}
\def\x{\times}
\def\diam{\operatorname{diam}}
\def\cD{{\cal D}}
\def\cM{{\cal M}}
\begin{document}
\title{Solution to Generalized Borsuk Problem in Terms of the Gromov--Hausdorff Distances to Simplexes}
\author{A.\,O.~Ivanov and A.\,A.~Tuzhilin}
\date{}
\maketitle

\begin{abstract}
In the present paper the following Generalized Borsuk Problem is studied: Can a given bounded metric space $X$ be partitioned into a given number $m$ (probably an infinite one) of subsets, each of which has a smaller diameter than $X$? We give a complete answer to this question in terms of   the Gromov--Hausdorff distance from $X$ to a simplex of cardinality $m$ and having a diameter less than $X$. Here a simplex is a metric space, all whose non-zero distances are the same.
\end{abstract}

\section*{Introduction}
\markright{\thesection.~Introduction}
The present paper is devoted to well-known problem, whose history goes up to Karol Borsuk: How many parts one needs to partition an arbitrary subset of the Euclidean space into, to obtain pieces of smaller diameters? In 1933 Borsuk made the following famous conjecture: Any bounded non-single-point subset of $\R^n$ can be partitioned into $n+1$ subsets, each of which has smaller diameter than the initial subset. This conjecture was proved by H.~Hadwiger~\cite{Hadw} and~\cite{Hadw2} for convex subsets with smooth boundaries, and then suddenly disproved in general case in 1993 by J.~Kahn, and G.~Kalai, see~\cite{KahnKalai}. The current state is described, for example, in~\cite{Raig}.

On the other hand, Lusternik and Schnirelmann~\cite{LustSch}, and a bit later independently Borsuk~\cite{BorCong} and~\cite{Borsuk}, see details, for example, in~\cite{Zieg}, have shown that the standard sphere and the standard ball in $\R^n$, $n\ge2$, cannot be partitioned into  $m\le n$ subsets having smaller diameters.

In the present paper we consider a generalized Borsuk problem passing to an arbitrary bounded metric space  $X$ and partitions of an arbitrary cardinality $m$ (not necessary a finite one). We give a criterion solving the Borsuk problem in terms of the famous Gromov--Hausdorff distance. It turns out that to verify the Borsuk problem solvability it suffices to calculate the Gromov--Hausdorff distance from the space $X$ to a simplex of the cardinality $m$ having a smaller diameter than $X$. Here simplex is a metric space, all whose non-zero distances are the same.

Notice that paper~\cite{IvaTuzSimpDist} and its generalization~\cite{GrigIvaTuz} are devoted to the calculation of this distance. Relations between the distances of this type and other geometrical problems is also demonstrated in paper~\cite{TuzMST-GH}, where in terms of the distances from a finite metric space $X$ to finite simplexes the edges lengths of a minimal spanning tree constructed on $X$ are calculated.

The work is partly supported by President RF Program supporting leading scientific schools of Russia (Project NSh--6399.2018.1, Agreement~075--02--2018--867), by RFBR, Project~19-01-00775-a, and also by MGU scientific schools support program.

\section{Preliminaries}
\markright{\thesection.~Preliminaries}
Let $X$ be an arbitrary set. By $\#X$ we denote the \emph{cardinality\/} of the set $X$.

Let $X$ be an arbitrary metric space. The distance between any its points $x$ and $y$ we denote by $|xy|$. If $A,B\ss X$ are non-empty subsets of $X$, then put $|AB|=\inf\bigl\{|ab|:a\in A,\,b\in B\bigr\}$. For $A=\{a\}$, we write $|aB|=|Ba|$ instead of $|\{a\}B|=|B\{a\}|$.

For each  point $x\in X$ and a number $r>0$, by $U_r(x)$ we denote the open ball with center $x$ and radius $r$; for any non-empty $A\ss X$ and a number $r>0$ put $U_r(A)=\cup_{a\in A}U_r(a)$.

\subsection{Hausdorff and Gromov--Hausdorff Distances}
For non-empty $A,\,B\ss X$ put
\begin{multline*}
d_H(A,B)=\inf\bigl\{r>0:A\ss U_r(B),\ \text{and}\ B\ss U_r(A)\bigr\}\\ =\max\{\sup_{a\in A}|aB|,\ \sup_{b\in B}|Ab|\}.
\end{multline*} 
This value is called the \emph{Hausdorff distance between $A$ and $B$}. It is well-known, see~\cite{BurBurIva}, \cite{ITlectHGH}, that the Hausdorff distance is a metric on the set of all non-empty bounded closed subsets of $X$.

Let $X$ and $Y$ be metric spaces. A triple $(X',Y',Z)$ consisting of a metric space $Z$ together with its subsets $X'$ and $Y'$ isometric to $X$ and $Y$, respectively, is called a \emph{realization of the pair $(X,Y)$}. The \emph{Gromov--Hausdorff distance $d_{GH}(X,Y)$ between $X$ and $Y$} is the infimum of real numbers $r$ such that there exists a realization  $(X',Y',Z)$ of the pair $(X,Y)$ with $d_H(X',Y')\le r$. It is well-known~\cite{BurBurIva}, \cite{ITlectHGH}, that $d_{GH}$ is a metric on the set $\cM$ of all compact metric spaces considered up to an isometry.

For an arbitrary metric space $X$ and a number $\l>0$ by $\l X$ we denote the metric space that is obtained from $X$ by the multiplication of all its distances by $\l$. The metric space of cardinality $m$, such that all its non-zero distances are equal to $1$ is called the  \emph{unit simplex\/} and is denoted by $\D_m$. Each space $\l\D_m$, $\l>0$, is referred as  \emph{simplex}. Notice that $\D_1$ is the single-point metric space.

For an arbitrary metric space $X$ by $\diam X$ we denote its \emph{diameter\/} defined in the standard way:
$$
\diam X=\sup\bigl\{|xy|:x,y\in X\bigr\}.
$$
Notice that the space $X$ is bounded, if and only if $\diam X<\infty$.

\begin{prop}[\cite{BurBurIva}, \cite{ITlectHGH}]\label{prop:GH_simple}
For an arbitrary metric space $X$ we have
$$
2d_{GH}(\D_1,X)=\diam X.
$$
\end{prop}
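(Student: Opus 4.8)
The plan is to exploit the special structure of $\D_1$: since it is a single point $p$, any realization $(X',Y',Z)$ of the pair $(\D_1,X)$ places an isometric copy $p'$ of $p$ as a single point of $Z$ alongside an isometric copy $Y'$ of $X$. First I would simplify the Hausdorff distance using its two-term formula: $d_H(\{p'\},Y')=\max\{|p'Y'|,\sup_{b\in Y'}|p'b|\}$, and since $\sup_{b\in Y'}|p'b|\ge\inf_{b\in Y'}|p'b|=|p'Y'|$, the second term dominates, so $d_H(\{p'\},Y')=\sup_{b\in Y'}|p'b|$. Thus $d_{GH}(\D_1,X)$ equals the infimum, over all ways of adjoining a point $p$ to $X$ so as to form a metric space, of the quantity $\sup_{x\in X}|px|$.

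Next I would reformulate the problem purely in terms of extensions of the metric. Writing $f(x)=|px|$, admissibility of the extension is exactly the pair of triangle inequalities $|f(x)-f(y)|\le|xy|\le f(x)+f(y)$ for all $x,y\in X$, and the goal becomes to minimize $\sup_{x\in X}f(x)$ over all such $f\colon X\to[0,\infty)$.

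For the lower bound I would use the right-hand triangle inequality: for any $x,y\in X$ we have $|xy|\le f(x)+f(y)\le 2\sup_z f(z)$, and taking the supremum over $x,y$ gives $\diam X\le 2\sup_z f(z)$, so every admissible $f$ satisfies $\sup_z f(z)\ge\diam X/2$; hence $d_{GH}(\D_1,X)\ge\diam X/2$. For the matching upper bound I would exhibit the explicit constant extension $f\equiv\diam X/2$: one checks $|f(x)-f(y)|=0\le|xy|$ and $|xy|\le\diam X=f(x)+f(y)$, so this defines a genuine metric on $X\sqcup\{p\}$ (the adjoined point is distinct from every point of $X$ whenever $\diam X>0$), realizing the value $\diam X/2$. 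Combining the two bounds yields $2d_{GH}(\D_1,X)=\diam X$.

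The only delicate points are bookkeeping rather than depth: I would verify that the passage from abstract realizations in an ambient $Z$ to metric extensions of $X$ by a single point loses nothing, and I would dispose of the degenerate case $\diam X=0$ (where $X$ is a single point and both sides vanish) separately. The step most worth stating carefully is that the optimal extension is the constant one, which simultaneously attains the lower bound and confirms that the infimum defining $d_{GH}$ is in fact a minimum.
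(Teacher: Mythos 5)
Your proof is correct, and in fact the paper offers no proof of this proposition at all: it is quoted as a known result from \cite{BurBurIva} and \cite{ITlectHGH}, and your two-sided argument (lower bound via $|xy|\le f(x)+f(y)\le 2\sup f$, upper bound via the constant extension $f\equiv\diam X/2$) is exactly the standard argument found in those references. The one case worth adding to your bookkeeping, besides $\diam X=0$: the proposition allows unbounded $X$, where the constant extension does not exist; but your lower bound already forces $\sup f=\infty$ for every admissible $f$, hence every realization has infinite Hausdorff distance and $2d_{GH}(\D_1,X)=\infty=\diam X$ holds trivially.
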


We need the following two Theorems from~\cite{GrigIvaTuz}, which the main results of the present paper is based on.

\begin{thm}[{\cite[Theorem~2.1]{GrigIvaTuz}}]\label{thm:dist-n-simplex-bigger-dim}
Let $X$ be an arbitrary bounded metric space, and $\#X<\#\l\D$, then
$$
2d_{GH}(\l\D,X)=\max\{\l,\diam X-\l\}.
$$
\end{thm}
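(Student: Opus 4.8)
The plan is to use the standard reformulation of the Gromov--Hausdorff distance in terms of correspondences (see \cite{BurBurIva}, \cite{ITlectHGH}): a \emph{correspondence} between metric spaces $P$ and $Q$ is a relation $R\ss P\x Q$ whose projections to $P$ and to $Q$ are both surjective, its \emph{distortion} is
$$
\dis R=\sup\bigl\{\,\bigl||p_1p_2|-|q_1q_2|\bigr| : (p_1,q_1),(p_2,q_2)\in R\,\bigr\},
$$
and $2d_{GH}(P,Q)=\inf_R\dis R$, the infimum being over all correspondences $R$. Write $D=\diam X$, and note that all distances of $\l\D$ lie in $\{0,\l\}$, so in particular $\diam(\l\D)=\l$ (the case $\#\l\D=1$ is excluded by $\#X<\#\l\D$, whence $\#\l\D\ge2$). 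The strategy is then to prove the two matching inequalities, producing one convenient correspondence for the upper bound and estimating an arbitrary correspondence from below for the lower bound.

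For the upper bound I would exhibit a single correspondence. Since $\#X<\#\l\D$, there is a surjection $f$ from the vertex set of $\l\D$ onto $X$; let $R$ consist of all pairs $(v,f(v))$ as $v$ runs over the vertices of $\l\D$. Both projections of $R$ are surjective, so $R$ is a correspondence. For two such pairs the contribution to $\dis R$ is $0$ when the vertices coincide and $\bigl|\l-|f(v)f(v')|\bigr|$ otherwise; since $|f(v)f(v')|\in[0,D]$, each such value is at most $\max\{\l,\,|\l-D|\}=\max\{\l,D-\l\}$. Hence $\dis R\le\max\{\l,D-\l\}$, and therefore $2d_{GH}(\l\D,X)\le\max\{\l,D-\l\}$.

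For the lower bound I would show that \emph{every} correspondence $R$ satisfies $\dis R\ge\l$ and $\dis R\ge D-\l$ separately. For the first inequality, choose for each vertex $v$ of $\l\D$ some $x(v)\in X$ with $(v,x(v))\in R$; this defines a map from a set of cardinality $\#\l\D$ into $X$, and since $\#X<\#\l\D$ it cannot be injective, so there are distinct vertices $v\ne v'$ with $x(v)=x(v')$. The pairs $(v,x(v))$ and $(v',x(v'))$ then force $\dis R\ge\bigl||vv'|-0\bigr|=\l$. For the second inequality (nontrivial only when $D>\l$), fix $\e>0$ with $\e<D-\l$ and pick $x,y\in X$ with $|xy|>D-\e$; by surjectivity of the projection to $X$ there are vertices $v,v'$ with $(v,x),(v',y)\in R$. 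As $|vv'|\le\l<|xy|$, the two pairs give $\dis R\ge|xy|-|vv'|\ge|xy|-\l>D-\e-\l$, and letting $\e\to0$ yields $\dis R\ge D-\l$. Combining, $\dis R\ge\max\{\l,D-\l\}$ for all $R$, so $2d_{GH}(\l\D,X)\ge\max\{\l,D-\l\}$, which together with the upper bound proves the claim.

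The only genuinely delicate point is the first lower-bound inequality: it is exactly here that the strict hypothesis $\#X<\#\l\D$ is used, guaranteeing by a cardinality (pigeonhole) argument that some point of $X$ corresponds to two distinct vertices of the simplex and thereby absorbing the full inter-vertex distance $\l$ into the distortion. For infinite $\#\l\D$ one phrases this as: an injective map from a set of cardinality $\#\l\D$ into $X$ would force $\#\l\D\le\#X$, contradicting $\#X<\#\l\D$. Everything else reduces to the elementary estimate $|\l-t|\le\max\{\l,D-\l\}$ for $t\in[0,D]$ and to the observation that all distances of $\l\D$ are bounded by $\l$.
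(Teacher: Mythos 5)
Your proof is correct. Note that the paper itself gives no proof of this statement---it is imported verbatim as Theorem~2.1 of \cite{GrigIvaTuz}---and your argument (upper bound via the correspondence that is the graph of a surjection $\l\D\to X$, lower bound $\dis R\ge\l$ from the pigeonhole consequence of $\#X<\#\l\D$, and $\dis R\ge\diam X-\l$ from a near-diametral pair of $X$) is essentially the standard correspondence-based proof given in that reference, so there is nothing to flag.
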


Let $X$ be an arbitrary set and $m$ a cardinal number that does not exceed $\#X$. By  $\cD_m(X)$ we denote the family of all possible partitions of the set $X$ into $m$ non-empty subsets.

Now let $X$ be a metric space. Then for each $D=\{X_i\}_{i\in I}\in\cD_m(X)$ put
$$
\diam D=\sup_{i\in I}\diam X_i.
$$
Further, for any non-empty $A,B\ss X$ put $|AB|=\inf\bigl\{|ab|:(a,b)\in A\x B\bigr\}$,
and for each $D=\{X_i\}_{i\in I}\in\cD_m(X)$ put
$$
\a(D)=\inf\bigl\{|X_iX_j|:i\ne j\bigr\}.
$$

\begin{thm}[{\cite[Proposition~2.4 and Corollary~2.3]{GrigIvaTuz}}]\label{thm:GH-dist-alpha-beta}
Let $X$ be an arbitrary bounded metric space and $m$ a cardinal number such that $m\le\#X$. Then
$$
2d_{GH}(\l\D_m,X)=\inf_{D\in\cD_m(X)}\max\{\diam D,\,\l-\a(D),\,\diam X-\l\}.
$$
\end{thm}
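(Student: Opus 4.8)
The plan is to use the standard correspondence description of the Gromov--Hausdorff distance: $2d_{GH}(\l\D_m,X)=\inf_R\dis R$, where $R$ ranges over all correspondences between $\l\D_m$ and $X$ (relations projecting onto both factors) and $\dis R=\sup\bigl\{\bigl||pq|-|p'q'|\bigr|:(p,p'),(q,q')\in R\bigr\}$; this is the classical reformulation from \cite{BurBurIva}, \cite{ITlectHGH}. Writing $\l\D_m=\{s_i\}_{i\in I}$ with $\#I=m$ and $|s_is_j|=\l$ for $i\ne j$, I will match correspondences with partitions and compare distortions against the three quantities $\diam D$, $\l-\a(D)$ and $\diam X-\l$.

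For the inequality $\le$, to each partition $D=\{X_i\}_{i\in I}\in\cD_m(X)$ I associate the correspondence $R_D=\{(s_i,x):x\in X_i\}$, which is a genuine correspondence because the $X_i$ are non-empty and cover $X$. Computing $\dis R_D$ splits into diagonal pairs ($i=j$), contributing $\sup_i\diam X_i=\diam D$, and off-diagonal pairs ($i\ne j$), contributing $\bigl|\l-|xy|\bigr|$ with $x\in X_i$, $y\in X_j$; since such $|xy|$ lies between $\a(D)$ and $\diam X$, this is at most $\max\{\l-\a(D),\,\diam X-\l\}$. Hence $2d_{GH}(\l\D_m,X)\le\dis R_D\le\max\{\diam D,\,\l-\a(D),\,\diam X-\l\}$, and taking the infimum over $D$ yields one direction.

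For the reverse inequality $\ge$, I first record that $\diam X-\l\le\dis R$ for every correspondence $R$, which for $m\ge2$ follows from $\diam(\l\D_m)=\l$ together with the elementary distortion bound $|\diam A-\diam B|\le\dis R$ (the case $m=1$ is Proposition~\ref{prop:GH_simple}, where the formula degenerates correctly). Given any correspondence $R$ with $t:=\dis R$, I form the fibers $X_i=\{x:(s_i,x)\in R\}$; these are non-empty, cover $X$, and satisfy $\diam X_i\le t$ since $x,y\in X_i$ gives $|xy|=\bigl||s_is_i|-|xy|\bigr|\le t$. The crucial dichotomy is whether $t<\l$ or $t\ge\l$. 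If $t<\l$, the fibers are pairwise disjoint, for a point in $X_i\cap X_j$ with $i\ne j$ would force $t\ge\bigl||s_is_j|-0\bigr|=\l$; thus $\{X_i\}\in\cD_m(X)$, and for $x\in X_i$, $y\in X_j$ one gets $|xy|\ge\l-t$, so $\a(D)\ge\l-t$ and $\max\{\diam D,\,\l-\a(D),\,\diam X-\l\}\le t$. If instead $t\ge\l$, I disjointify the cover into an honest partition and then refine it, using $m\le\#X$, to exactly $m$ non-empty pieces; each piece is a subset of some fiber, so $\diam D\le t$, while $\l-\a(D)\le\l\le t$ and $\diam X-\l\le t$ are automatic. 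In either case a partition $D$ with $\max\{\diam D,\,\l-\a(D),\,\diam X-\l\}\le\dis R$ is produced, whence $\inf_D\max\{\dots\}\le\inf_R\dis R=2d_{GH}(\l\D_m,X)$, and combining with the first direction gives equality.

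I expect the main obstacle to be the bookkeeping in the reverse direction when $t\ge\l$ and the fibers overlap: disjointification may empty some classes, so one must re-split larger classes to restore the prescribed cardinality $m$, and for infinite $m$ this requires a short cardinality argument, available precisely because $m\le\#X$. By contrast, the forward direction and the evaluation of $\dis R_D$ are routine once the correspondence $R_D$ is written down.
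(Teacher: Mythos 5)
Your proof is correct, but note that this paper does not prove the statement at all: it is imported verbatim from \cite[Proposition~2.4 and Corollary~2.3]{GrigIvaTuz}, so there is no in-paper argument to compare against. Your route via the correspondence formulation $2d_{GH}(\l\D_m,X)=\inf_R\dis R$ and the dictionary between correspondences and partitions \(fibers $X_i=\{x:(s_i,x)\in R\}$ in one direction, $R_D=\{(s_i,x):x\in X_i\}$ in the other\) is essentially the standard argument of the cited source, and all the delicate points are handled: the dichotomy $\dis R<\l$ versus $\dis R\ge\l$ correctly separates the case where the fibers are automatically disjoint from the case where $\l-\a(D)\le\l\le\dis R$ holds for free, and the remaining disjointification-plus-refinement step, which you flag rather than spell out, is indeed a routine cardinality argument valid exactly because $m\le\#X$ \(any partition into at most $m$ parts refines to one with exactly $m$ nonempty parts, and passing to subsets of fibers only decreases diameters\). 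The degenerate case $m=1$, where $\a(D)=\inf\0=+\infty$, is also correctly reconciled with Proposition~\ref{prop:GH_simple}.
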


\section{Generalized Borsuk Problem}
\markright{\thesection.~Generalized Borsuk Problem}

Classical Borsuk Problem deals with partitions of subsets of Euclidean space into parts having smaller diameters, see details in Introduction. Now, generalize Borsuk Problem to arbitrary bounded metric spaces and partitions of arbitrary cardinality. Let $X$ be a bounded metric space, $m$ a cardinal number such that $m\le\#X$, and $D=\{X_i\}_{i\in I}\in\cD_m(X)$. We say that  $D$ is a partition into subsets having \emph{strictly smaller diameters}, if there exists $\e>0$ such that $\diam X_i\le\diam X-\e$ for all $i\in I$.

By \emph{Generalized Borsuk Problem\/} we call the following one: Can a given bounded metric space $X$ be partitioned into a given, probably infinite, number of subsets, each of which has a strictly smaller diameter than $X$?

We give the folloiwng solution to the Generalized Borsuk Problem in terms of the Gromov--Hausdorff distance.

\begin{thm}\label{thm:main}
Let $X$ be an arbitrary bounded metric space and $m$ a cardinal number such that $m\le\#X$. Choose an arbitrary number $0<\l<\diam X$, then $X$ can be partitioned into $m$ subsets having  strictly smaller diameters, if and only if  $2d_{GH}(\l\D_m,X)<\diam X$.
\end{thm}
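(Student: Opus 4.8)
The plan is to express the Borsuk-partition condition directly through the formula for $2d_{GH}(\l\D_m,X)$ provided by Theorem~\ref{thm:GH-dist-alpha-beta}, and then to check that the strict inequality $2d_{GH}(\l\D_m,X)<\diam X$ is equivalent to the existence of a partition into subsets of strictly smaller diameter. By that theorem, since $\l<\diam X$ forces the term $\diam X-\l$ to be strictly positive, we have
$$
2d_{GH}(\l\D_m,X)=\inf_{D\in\cD_m(X)}\max\{\diam D,\,\l-\a(D),\,\diam X-\l\}.
$$
The key observation is that every term inside the outer $\max$ is bounded above by $\diam X$ for an appropriate choice of $D$: indeed $\diam D\le\diam X$ and $\l-\a(D)\le\l<\diam X$ always hold, so the only obstruction to the infimum being $<\diam X$ comes from the diameters $\diam X_i$ of the parts. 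Thus the whole argument reduces to comparing $\inf_{D}\diam D$ with $\diam X$.

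First I would prove the forward implication. Suppose $X$ admits a partition $D=\{X_i\}_{i\in I}\in\cD_m(X)$ into subsets of strictly smaller diameter, so there is $\e>0$ with $\diam X_i\le\diam X-\e$ for all $i$, hence $\diam D\le\diam X-\e$. Plugging this particular $D$ into the formula, the outer maximum for this $D$ is $\max\{\diam D,\,\l-\a(D),\,\diam X-\l\}$, and each of the three entries is strictly less than $\diam X$: the first by the choice of $\e$, the second because $\l-\a(D)\le\l<\diam X$, and the third because $\l>0$ gives $\diam X-\l<\diam X$. Since the infimum over all partitions is at most the value attained on this single $D$, we conclude $2d_{GH}(\l\D_m,X)<\diam X$.

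For the converse, assume $2d_{GH}(\l\D_m,X)=c<\diam X$. By definition of the infimum, for every $\eta>0$ there is a partition $D$ with $\max\{\diam D,\,\l-\a(D),\,\diam X-\l\}<c+\eta$; choosing $\eta$ small enough that $c+\eta<\diam X$ yields a single partition $D=\{X_i\}_{i\in I}$ with $\diam D<\diam X$, i.e. $\sup_{i}\diam X_i<\diam X$. Setting $\e=\diam X-\diam D>0$ gives $\diam X_i\le\diam D=\diam X-\e$ for all $i$ simultaneously, which is exactly a partition into subsets of strictly smaller diameter. The main subtlety to watch here is the uniformity built into the definition of ``strictly smaller diameters'': the single $\e$ must work for all parts at once. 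This is precisely why the bound $\diam D<\diam X$ (a supremum strictly below $\diam X$, giving a uniform gap $\e$) is the right quantity, rather than the merely pointwise condition $\diam X_i<\diam X$ for each $i$; an infinite partition could have $\diam X_i<\diam X$ for every $i$ while $\sup_i\diam X_i=\diam X$, which would fail the strict-diameter requirement. I expect this handling of the uniform $\e$ across possibly infinitely many parts, together with the careful transition from the infimum defining $d_{GH}$ to an actually realized partition, to be the only genuinely delicate point; the rest follows mechanically from Theorem~\ref{thm:GH-dist-alpha-beta}.
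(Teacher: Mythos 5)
Your proof is correct and takes essentially the same route as the paper: both apply Theorem~\ref{thm:GH-dist-alpha-beta} and observe that the terms $\l-\a(D)$ and $\diam X-\l$ are automatically below $\diam X$, so everything reduces to whether some partition $D$ satisfies $\diam D<\diam X$. You merely unpack the paper's one-line equality-case argument into explicit forward and converse implications, correctly noting that $\diam D<\diam X$ encodes the uniform $\e$ required by the definition.
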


\begin{proof}
For the $\l$ chosen, due to Theorem~\ref{thm:GH-dist-alpha-beta}, we have $2d_{GH}(\l\D_m,X)\le\diam X$, and the equality holds, if and only if for each $D\in\cD_m(X)$ the equality $\diam D=\diam X$ is valid. The latter means that there is no partition of the space $X$ into $m$ parts having strictly smaller diameters.
\end{proof}

Next Corollary gives a geometrical description of the spaces which Generalized Borsuk Problem has no solution for.

\begin{cor}
Let $d>0$ be a real number, and $m\le n$ cardinal numbers. By $\cM_n$ we denote the set of isometry classes of bounded metric spaces of cardinality at most $n$, endowed with the Gromov--Hausdorff distance. Choose an arbitrary $0<\l<d$. Then the intersection
$$
S_{d/2}(\D_1)\cap S_{d/2}(\l\D_m)
$$
of the spheres, considered as the spheres in $\cM_n$, does not contain spaces of cardinality less than $m$, and consists exactly of all the metric spaces from $\cM_n$, whose diameters are equal to $d$ and that cannot be partitioned into $m$ subsets of strictly smaller diameters.
\end{cor}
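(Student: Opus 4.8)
The plan is to decode the two spheres separately and then glue the descriptions together using the two structural results already in hand. First I would apply Proposition~\ref{prop:GH_simple}: since $2d_{GH}(\D_1,X)=\diam X$, a space $X\in\cM_n$ lies on $S_{d/2}(\D_1)$ exactly when $\diam X=d$. So the sphere centered at the one-point space is precisely the set of spaces of diameter $d$, and the whole problem reduces to describing, among spaces of diameter $d$ in $\cM_n$, which ones also satisfy $2d_{GH}(\l\D_m,X)=d$, i.e.\ lie on $S_{d/2}(\l\D_m)$.

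Next I would settle the low-cardinality assertion. Suppose $X\in\cM_n$ has $\#X<m$ and $\diam X=d$. Then $\#X<\#\l\D_m$, so Theorem~\ref{thm:dist-n-simplex-bigger-dim} yields $2d_{GH}(\l\D_m,X)=\max\{\l,\diam X-\l\}=\max\{\l,d-\l\}$. Since $0<\l<d$, both $\l<d$ and $d-\l<d$, hence this maximum is strictly less than $d$; therefore $X\notin S_{d/2}(\l\D_m)$, and in particular $X$ is not in the intersection. This proves that the intersection contains no space of cardinality less than $m$, and at the same time it confirms that the spaces I am about to characterize, for which a partition into $m$ parts is even meaningful, necessarily have $\#X\ge m$.

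Finally, for the main equivalence I would take $X\in\cM_n$ with $m\le\#X\le n$ and $\diam X=d$, and invoke Theorem~\ref{thm:main} with the chosen $\l$, which is legitimate because $0<\l<d=\diam X$. The proof of that theorem also records that $2d_{GH}(\l\D_m,X)\le\diam X=d$ always holds; consequently membership $X\in S_{d/2}(\l\D_m)$, i.e.\ $2d_{GH}(\l\D_m,X)=d$, is equivalent to the statement that $2d_{GH}(\l\D_m,X)$ is not strictly less than $d$. By Theorem~\ref{thm:main} the latter is exactly the failure of $X$ to admit a partition into $m$ subsets of strictly smaller diameters. Intersecting with the diameter condition supplied by the first sphere, I conclude that $S_{d/2}(\D_1)\cap S_{d/2}(\l\D_m)$ consists precisely of the spaces in $\cM_n$ of diameter $d$ that cannot be so partitioned.

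I do not expect a genuine obstacle: the two cited theorems carry all the analytic content, and the only delicate point is the cardinality bookkeeping, namely making sure the ``larger simplex'' formula of Theorem~\ref{thm:dist-n-simplex-bigger-dim} is used exactly in the range $\#X<m$ and the ``$m\le\#X$'' machinery (through Theorem~\ref{thm:main}) otherwise. One should also note that spaces of cardinality below $m$ are excluded from the intersection by the sphere condition on $S_{d/2}(\l\D_m)$ rather than by the (vacuous) partition condition, so that the two clauses of the statement are consistent.
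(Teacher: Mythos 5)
Your proposal is correct and follows essentially the same route as the paper's own proof: Proposition~\ref{prop:GH_simple} identifies $S_{d/2}(\D_1)$ with the spaces of diameter $d$, Theorem~\ref{thm:dist-n-simplex-bigger-dim} excludes spaces with $\#X<m$ via $\max\{\l,d-\l\}<d$, and Theorem~\ref{thm:main} gives the partition characterization in both directions. Your explicit remark that $2d_{GH}(\l\D_m,X)\le\diam X$ always holds (so that ``not less than $d$'' means ``equal to $d$'') is a small point the paper leaves implicit, but the argument is the same.
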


\begin{proof}
Let $X$ belong to the intersection of the spheres, then $\diam X=d$ in accordance with Proposition~\ref{prop:GH_simple}. If $m>\#X$, then, due to Theorem~\ref{thm:dist-n-simplex-bigger-dim}, we have
$$
2d_{GH}(\l\D,X)=\max\{\l,\diam X-\l\}<d,
$$
therefore $X\not\in S_{d/2}(\l\D_m)$, that proves the first statement of Corollary.

Now let $m\le\#X$. Since $\diam X=d$ and $2d_{GH}(\l\D_m,X)=d$, then, due to Theorem~\ref{thm:main}, the space $X$ cannot be partitioned into $m$ subsets of strictly smaller diameters.

Conversely, each $X$ of the diameter $d$, such that $m\le\#X$ and that cannot be partitioned into $m$ subsets of strictly smaller diameter lies in the intersection of the spheres by Theorem~\ref{thm:main}.
\end{proof}

\markright{References}

\end{document}